\newtheorem{theorem}{Theorem}
\newtheorem{lemma}[theorem]{Lemma}
\newtheorem{corollary}[theorem]{Corollary}
\theoremstyle{definition}
\newtheorem{remark}[theorem]{Remark}
\newcommand{\C}{{\mathbb C}}
\newcommand{\R}{{\mathbb R}}
\renewcommand{\O}{{\mathscr O}}
\newcommand{\Z}{{\mathbb Z}}
\newcommand{\B}{{\mathbb B}}
\title{Representing de Rham cohomology classes \\ on an open Riemann surface by holomorphic forms}
\author{Antonio Alarc\'on and Finnur L\'arusson}
\address{Antonio Alarc\'on, Departamento de Geometr\'ia y Topolog\'ia e Instituto de Mate\-m\'aticas (IEMath-GR), Universidad de Granada, Campus de Fuentenueva s/n, E-18071 Granada, Spain}
\email{alarcon@ugr.es}
\address{Finnur L\'arusson, School of Mathematical Sciences, University of Adelaide, Adelaide SA 5005, Australia}
\email{finnur.larusson@adelaide.edu.au}
\subjclass[2010]{Primary 30F30.  Secondary 30F99, 32H02, 53A10, 55P10.}
\keywords{Riemann surface, de Rham cohomology, minimal surface, holomorphic null curve, Serre fibration, convex integration, period-dominating spray}
\date{1 April 2017}
\thanks{A. Alarc\'on is supported by the Ram\'on y Cajal program of the Spanish Ministry of Economy and Competitiveness and supported in part by the MINECO/FEDER grant no.\ MTM2014-52368-P, Spain.  F.~L\'arusson is supported by Australian Research Council grant DP150103442.  The work on this paper was done while A. Alarc\'on visited the University of Adelaide in March 2017.  He would like to thank the university for the hospitality.}
\begin{document}

\begin{abstract}    
Let $X$ be a connected open Riemann surface.  Let $Y$ be an Oka domain in the smooth locus of an analytic subvariety of $\C^n$, $n\geq 1$, such that the convex hull of $Y$ is all of $\C^n$.  Let $\O_*(X, Y)$ be the space of nondegenerate holomorphic maps $X\to Y$.  Take a holomorphic $1$-form $\theta$ on $X$, not identically zero, and let $\pi:\O_*(X,Y) \to H^1(X,\C^n)$ send a map $g$ to the cohomology class of $g\theta$.  Our main theorem states that $\pi$ is a Serre fibration.  This result subsumes the 1971 theorem of Kusunoki and Sainouchi that both the periods and the divisor of a holomorphic form on $X$ can be prescribed arbitrarily.  It also subsumes two parametric h-principles in minimal surface theory proved by Forstneri\v c and L\'arusson in 2016.
\end{abstract}

\maketitle

\noindent
We start by recalling three theorems from the early decades of modern Riemann surface theory.  Behnke and Stein proved that the periods of a holomorphic form on an open Riemann surface $X$ can be prescribed arbitrarily \cite[Satz 10]{Behnke-Stein-1947} (see also \cite[Theorem 28.6]{Forster-1981}).  In other words, every class in the cohomology group $H^1(X,\C)$ contains a holomorphic form.  Gunning and Narasimhan showed that the zero class contains a holomorphic form with no zeros \cite{Gunning-Narasimhan-1967}.  In other words, there is a holomorphic immersion $X\to\C$.  Kusunoki and Sainouchi generalised these two theorems and proved that both the periods and the divisor of a holomorphic form on $X$ can be prescribed arbitrarily \cite[Theorem 1]{Kusunoki-Sainouchi-1971}.

Our main result subsumes the theorem of Kusunoki and Sainouchi as a very special case.  It also subsumes different and much more recent results from the theory of minimal surfaces, which we shall now describe.  Let $\mathfrak M_*(X,\R^n)$ denote the space (with the compact-open topology) of conformal minimal immersions $X\to\R^n$, $n\geq 3$, that are nonflat in the sense that the image of $X$ is not contained in any affine $2$-plane in $\R^n$.  Some such immersions are obtained as the real parts of holomorphic null curves, that is, holomorphic immersions $X\to\C^n$ directed by the null quadric $\mathfrak A=\{(z_1,\ldots,z_n)\in\C^n:z_1^2+\cdots+z_n^2=0\}$.  We denote by $\mathfrak N_*(X,\C^n)$ the space of holomorphic null curves that are nonflat, meaning that the image of $X$ is not contained in any affine complex line in $\C^n$.  

Forstneri\v c and L\'arusson determined the weak homotopy type of $\mathfrak M_*(X,\R^n)$ and $\mathfrak N_*(X,\C^n)$ \cite{Forstneric-Larusson-CAG}.  The two spaces turn out to have the same weak homotopy type, namely the weak homotopy type of the space of continuous maps $X\to\mathfrak A_*=\mathfrak A\setminus\{0\}$, which we understand.  Key ingredients in the proof are two parametric h-principles \cite[Theorems 4.1 and 5.3]{Forstneric-Larusson-CAG} that imply that the maps
\[ p:\mathfrak M_*(X,\R^n) \to \O_*(X,\mathfrak A_*), \quad u\mapsto \partial u/\theta, \]
\[ q:\mathfrak N_*(X,\C^n) \to \O_*(X,\mathfrak A_*), \quad F\mapsto \partial F/\theta, \]
are weak homotopy equivalences.  Here, $\theta$ is a chosen holomorphic $1$-form on $X$ without zeroes and $\O_*(X,\mathfrak A_*)$ is the space of holomorphic maps $f:X\to \mathfrak A_*$ that are nondegenerate in the sense that the tangent spaces $T_{f(x)}\mathfrak A\subset T_{f(x)}\C^n\cong\C^n$, as $x$ ranges through $X$, span $\C^n$.  These parametric h-principles are proved using Gromov's method of convex integration and period-dominating sprays, a tool from Oka theory introduced in \cite{Alarcon-Forstneric-2014}.  Our main theorem implies that both $p$ and $q$ are weak homotopy equivalences.

We now proceed to the statement of our main result.  Let $X$ be an open Riemann surface, always assumed connected.  We view the cohomology group $H^1(X,\C)$ as the de Rham group of holomorphic $1$-forms modulo exact forms, with the quotient topology induced from the compact-open topology.  

Let $Y$ be an Oka domain in the smooth locus of an analytic subvariety of $\C^n$, $n\geq 1$, such that the convex hull of $Y$ is all of $\C^n$.  For example, $Y$ could be $\mathfrak A_*$.  Also, $Y$ could be a smooth algebraic subvariety of $\C^n$ that is Oka and not contained in any hyperplane; then the convex hull of $Y$ is $\C^n$ \cite[Lemma 3.1]{Alarcon-Forstneric-2014}.

Let $\O_*(X, Y)$ be the space of holomorphic maps $f:X\to Y$ that are nondegenerate in the sense that the tangent spaces $T_{f(x)} Y\subset T_{f(x)}\C^n\cong\C^n$, as $x$ ranges through $X$, span $\C^n$.  We note that $\O_*(X, Y)$ is not empty.  Indeed, since the convex hull of $Y$ is $\C^n$, $Y$ is not contained in any hyperplane, so the tangent spaces of $Y$ at finitely many points $a_1,\ldots,a_m$ in $Y$ span $\C^n$.  There is a continuous map $X\to Y$ with $a_1,\ldots,a_m$ in its image, and since $Y$ is Oka, the map can be deformed to a holomorphic map keeping it fixed at a preimage of each $a_j$.  This holomorphic map is then nondegenerate.

Take a holomorphic $1$-form $\theta$ on $X$, not identically zero, and let the continuous map
\[\pi:\O_*(X,Y) \to H^1(X,\C^n) \] 
send a map $g$ to the cohomology class of $g\theta$.  The map $\pi$ is our central object of interest.  Our main result states that $\pi$ is a Serre fibration.  Here it is, in a more explicit formulation (see Remark \ref{rem:equivalence}).

\begin{theorem}  \label{thm:main}
Let $X$ be an open Riemann surface, $\theta$ be a holomorphic $1$-form on $X$, not identically zero, and $Y$ be an Oka domain in the smooth locus of an analytic subvariety of $\C^n$, $n\geq 1$, such that the convex hull of $Y$ is all of $\C^n$.  Let $\O_*(X,Y)$ be the subspace of $\O(X,Y)$ of nondegenerate maps and $\pi:\O_*(X,Y) \to H^1(X,\C^n)$ be the projection sending a map $g$ to the cohomology class of $g\theta$.

Let $Q\subset P$ be compact subsets of $\R^m$ for some $m$.  Let $f:P\to\O_*(X,Y)$ and $\phi:P\to H^1(X,\C^n)$ be continuous maps such that $\pi\circ f=\phi$ on $Q$.  Then $f$ can be deformed, keeping $f\vert_Q$ fixed, to a continuous map $f^1:P\to\O_*(X,Y)$ with $\pi\circ f^1=\phi$ on $P$.
\end{theorem}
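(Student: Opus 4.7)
The plan is to combine a Runge exhaustion of $X$ with a parametric version of the period-dominating spray technique from \cite{Alarcon-Forstneric-2014}, exploiting that $Y$ is Oka and that its convex hull is $\C^n$. Since $H^1(X,\C^n) \cong \mathrm{Hom}(H_1(X,\Z),\C^n)$ and $H_1(X,\Z)$ is free abelian, a cohomology class $\phi(t)$ is detected by its periods over a basis of $H_1$. The theorem will then be reformulated as a parametric Runge-type problem: prescribe the integrals $\int_\gamma g_t\,\theta$ over a prescribed basis of cycles, continuously in $t\in P$, equal to the given values on $Q$.

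First I would fix a smoothly bounded Runge exhaustion $K_1\Subset K_2\Subset\cdots$ of $X$ such that each $K_{j+1}\setminus K_j$ carries only finitely many new generators of $H_1$. The proof proceeds by induction, producing a sequence of continuous families $f_j\colon P\to\O(X,Y)$, each equal to $f$ on $Q$, with $f_j(t)$ nondegenerate, with periods equal to $\phi(t)$ on every cycle supported in $K_j$, and with $f_{j+1}$ uniformly close to $f_j$ on $K_{j-1}$ (with rapidly shrinking error). The limit $f^1=\lim f_j$ then exists, takes values in $Y$, is nondegenerate (nondegeneracy is an open condition), and satisfies $\pi\circ f^1=\phi$ on all of $P$; the homotopy from $f$ to $f^1$ is obtained by concatenating the deformations from each inductive step.

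The inductive step is the heart of the argument. Given $f_j$, I must deform it on $K_{j+1}$ so as to achieve the prescribed periods on the new cycles $\gamma_1,\ldots,\gamma_k$ appearing in $K_{j+1}\setminus K_j$, leaving $f_j$ almost unchanged on $K_{j-1}$ and unchanged on $Q$. To this end, at every $t\in P$ one builds a holomorphic spray $G_t\colon K_{j+1}\times B\to Y$, with $G_t(\cdot,0)=f_j(t)$ and $B\subset\C^N$ a small ball, by composing the restrictions $f_j(t)$ with flows of finitely many holomorphic vector fields tangent to $Y$ evaluated at well-chosen points of $K_{j+1}$. The nondegeneracy of $f_j(t)$ together with the hypothesis that the convex hull of $Y$ is $\C^n$ guarantees that these vector fields can be chosen so that the finite-dimensional period map
\[ B\to (\C^n)^k,\qquad w\mapsto \Bigl(\int_{\gamma_i} G_t(\cdot,w)\,\theta\Bigr)_{i=1}^k, \]
is a submersion at $0$. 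Local-in-$t$ versions of such sprays are patched by a continuous partition of unity on $P$; Runge approximation, justified by the parametric Oka property of $Y$ with interpolation on the preimage of $Q$, extends the spray from $K_{j+1}$ to $X$; and the implicit function theorem selects a continuous section $w\colon P\to B$, with $w\equiv 0$ on $Q$, such that $f_{j+1}(t)=G_t(\cdot,w(t))$ has the desired periods on $\gamma_1,\ldots,\gamma_k$.

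The main obstacle is assembling the spray so that it simultaneously takes values in $Y$ rather than merely in $\C^n$, is trivial over $Q$ so that $f|_Q$ is preserved exactly, and is period-dominating at every $t\in P$. The tangent-spanning condition built into nondegeneracy, the convex hull hypothesis on $Y$, and the Oka property (which supplies sufficiently many holomorphic tangent vector fields along $Y$) are precisely what make this feasible, but the parametric bookkeeping together with the relative condition on $Q$ must be handled carefully. Once this inductive step is in place, the Mittag-Leffler-type passage to the limit is standard and yields the result, including the special cases of Kusunoki-Sainouchi and the h-principles for $p$ and $q$ in \cite{Forstneric-Larusson-CAG}.
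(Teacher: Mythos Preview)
Your overall architecture matches the paper's: a Runge/Morse exhaustion, a parametric period-dominating spray on each piece, Oka approximation to pass to the next level, and a Mittag-Leffler limit. But there is a genuine gap in your inductive step when a \emph{new} cycle $\gamma$ appears in $K_{j+1}\setminus K_j$.

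The period-dominating spray gives only a \emph{local} submersion of the period map at $w=0$, so the implicit function theorem lets you hit only period vectors that are \emph{close} to the current period $\int_\gamma f_j(t)\,\theta$. For cycles already in $K_j$ that is fine, since by induction those periods equal $\phi(t)$. But for a newborn cycle $\gamma$ there is no reason whatsoever for $\int_\gamma f_j(t)\,\theta$ to lie near the prescribed value $\int_\gamma\phi(t)$; the discrepancy can be arbitrarily large, uniformly in $t$. Your spray cannot bridge that gap, and your sentence invoking the implicit function theorem to ``achieve the prescribed periods on the new cycles'' is exactly where the argument breaks.

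The paper fixes this with a separate \emph{convex integration} step (their adaptation of \cite[Lemma 3.1]{Forstneric-Larusson-CAG}): on the new arc $E$ one first deforms $f_j(t)$, continuously in $(p,t)$ and supported in a subarc of $E$, so that the period over the new loop becomes \emph{approximately} equal to $\int_\gamma\phi(t)$. This is where the hypothesis $\mathrm{Co}(Y)=\C^n$ is actually used: any target vector in $\C^n$ can be approximated by an average of values in $Y$, which is what lets one redistribute the map along the arc to produce an arbitrary approximate period. Only after this coarse correction does the period-dominating spray (which needs no convex-hull hypothesis, only Cartan's Theorem~A on the ambient variety) make the fine adjustment via the implicit function theorem. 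You have conflated these two distinct mechanisms; once you insert the convex-integration lemma before your spray step in the critical case, the rest of your outline goes through essentially as in the paper.
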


The spaces and maps in the theorem are shown in the following diagram.
\[ \xymatrix{
Q \ar[r] \ar@{^{(}->}[d]  &  \O_*(X,Y) \ar^\pi[d] \\
P \ar_{\!\!\!\!\!\!\!\!\!\!\!\!\!\!\!\!\phi}[r] \ar^f[ur] &  H^1(X,\C^n)
} \]
The square and the upper triangle commute, whereas the lower triangle need not commute.  The theorem states that $f$ can be deformed, keeping the upper triangle commuting, until the lower triangle commutes.

We postpone the proof of the theorem until the end of the paper.

\begin{corollary}  \label{cor:main}
Under the assumptions of Theorem \ref{thm:main}, 
\begin{enumerate}
\item[(a)]  $\pi:\O_*(X,Y) \to H^1(X,\C^n)$ is a Serre fibration,
\item[(b)]  $\pi$ is surjective,
\item[(c)]  for every $\alpha\in H^1(X,\C^n)$, the inclusion $\pi^{-1}(\alpha)\hookrightarrow \O_*(X,Y)$ is a weak homotopy equivalence.
\end{enumerate}
\end{corollary}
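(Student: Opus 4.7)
The plan is to deduce each of (a), (b), (c) directly from Theorem \ref{thm:main}, which is already a parametric statement tailored to these applications.

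For (a), I would verify the Serre fibration property in its standard disk form: given continuous maps $h_0 : D^m \to \O_*(X,Y)$ and $H : D^m \times [0,1] \to H^1(X,\C^n)$ with $H(\cdot,0) = \pi \circ h_0$, I must produce a lift $\tilde H : D^m \times [0,1] \to \O_*(X,Y)$ of $H$ extending $h_0$. I would apply Theorem \ref{thm:main} with $P = D^m \times [0,1]$, $Q = D^m \times \{0\}$, $\phi = H$, and $f(x,t) = h_0(x)$. The compatibility $\pi \circ f = \phi$ on $Q$ holds by construction, and the resulting deformed map $f^1$ is the required lift.

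For (b), given $\alpha \in H^1(X,\C^n)$, I would apply the theorem with $P$ a one-point space, $Q = \emptyset$, $\phi$ the constant map with value $\alpha$, and $f$ the constant map with value any chosen $g_0 \in \O_*(X,Y)$; such a $g_0$ exists by the construction in the paragraph preceding the theorem. Both the compatibility condition and the rel-$Q$ clause become vacuous, and the output lies in $\pi^{-1}(\alpha)$.

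For (c), I would observe that $H^1(X,\C^n)$ is a topological vector space, hence contractible via the straight-line homotopy $(\alpha,t)\mapsto(1-t)\alpha$. Combined with (a) and the nonemptiness of $\pi^{-1}(\alpha)$ from (b), the long exact sequence of homotopy groups of the Serre fibration $\pi$ collapses at any basepoint in $\pi^{-1}(\alpha)$, giving isomorphisms $\pi_k(\pi^{-1}(\alpha)) \cong \pi_k(\O_*(X,Y))$ for $k\geq 1$ and a bijection on $\pi_0$. I anticipate no real obstacle: the corollary is a purely formal consequence of the theorem via standard homotopy theory. The one point worth flagging is that the contractibility of $H^1(X,\C^n)$ tacitly uses the classical fact that the image of $d$ is closed in the Fr\'echet space of holomorphic $1$-forms, so that the quotient topology is Hausdorff and the straight-line contraction is continuous.
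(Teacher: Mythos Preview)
Your proposal is correct and follows essentially the same route as the paper. The only cosmetic differences are that the paper uses cubes rather than disks in (a), deduces (b) from (a) via path-connectedness of the base rather than invoking Theorem~\ref{thm:main} directly, and for (c) phrases the argument as ``pullback of a weak equivalence along a fibration is a weak equivalence'' rather than via the long exact sequence; these are equivalent one-line arguments.
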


In the following, we will simply refer to a Serre fibration as a fibration.  Also, a weak homotopy equivalence will be called a weak equivalence.

\begin{proof}
(a)  We need to show that $\pi$ has the right lifting property with respect to every inclusion $\iota:I^k\times\{0\}\hookrightarrow I^{k+1}$, $k\geq 0$, where $I=[0,1]$.  Let $P=I^{k+1}$ and $Q=I^k\times\{0\}$.  Let $g:Q \to \O_*(X,Y)$ and $\phi:P\to H^1(X,\C^n)$ be continuous maps such that $\pi\circ g=\phi\circ\iota$.  Extend $g$ to a continuous map $f:P\to\O_*(X,Y)$ that is constant in the $(k+1)^\textrm{st}$ variable.  Then $\pi\circ f=\phi$ on $Q$, so by Theorem \ref{thm:main}, $f$ can be deformed, keeping $f\vert_Q$ fixed, to a continuous map $f^1:P\to\O_*(X,Y)$ with $\pi\circ f^1=\phi$.  Then also $f^1\circ\iota=g$, and $f^1$ demonstrates the right lifting property.

(b) follows from (a) since $H^1(X,\C^n)$ is path connected.

(c)  The inclusion $\pi^{-1}(\alpha)\hookrightarrow \O_*(X,Y)$ is a weak equivalence because it is the pullback by the fibration $\pi$ of the inclusion $\{\alpha\}\hookrightarrow H^1(X,\C^n)$, which is a weak equivalence since $H^1(X,\C^n)$ is contractible.
\end{proof}

\begin{remark}  \label{rem:equivalence}
If we restrict the conclusion of Theorem \ref{thm:main} to finite polyhedra, then it is \textit{equivalent} to $\pi$ being a fibration.  The equivalence relies on $H^1(X,\C^n)$ being contractible.  (Fibrations are characterised by the right lifting property with respect to several different classes of inclusions, so finite polyhedra are not the only possibility here.)

To demonstrate the equivalence, suppose that $\pi$ is a fibration.  Let $Q$ be a subpolyhedron of a finite polyhedron $P$, and let $g:Q\to \O_*(X,Y)$ and $\phi:P\to H^1(X,\C^n)$ be continuous maps such that $\pi\circ g=\phi\vert_Q$.  Consider the following commuting diagram.
\[ \xymatrix{
\{f\in\mathscr C(P,\O_*):f\vert_Q=g,\, \pi\circ f=\phi\} \ar@{^{(}->}[d] \ar@{^{(}->}[r] & \{f\in\mathscr C(P,\O_*):f\vert_Q=g\} \ar@{^{(}->}[d] \\
\mathscr C(P, \O_*) \ar_{(\pi_*,\iota^*)}[d] \ar@{=}[r] & \mathscr C(P, \O_*) \ar^{\iota^*}[d] \\
\mathscr C(P,H^1)\times_{\mathscr C(Q,H^1)} \mathscr C(Q,\O_*) \ar[r] & \mathscr C(Q,\O_*)
} \]
Here, $\iota^*$ is the restriction map induced by the inclusion $\iota:Q\hookrightarrow P$, the left-hand top space is the fibre of $(\pi_*,\iota^*)$ over $(\phi,g)$, the right-hand top space is the fibre of $\iota^*$ over $g$, and we have abbreviated $\O_*(X,Y)$ as $\O_*$ and $H^1(X,\C^n)$ as $H^1$.

Since $\iota$ is a cofibration (in the strong sense that goes with Serre fibrations), $\iota^*$ is a fibration.  Since also $\pi$ is a fibration by assumption, $(\pi_*,\iota^*)$ is a fibration by Quillen's axiom SM7 \cite[II.3.1]{Goerss-Jardine-1999}.  The bottom horizontal map is a weak equivalence, because it is the pullback of a weak equivalence by a fibration, as shown by the following pullback square.
\[ \xymatrix{
\mathscr C(P,H^1)\times_{\mathscr C(Q,H^1)} \mathscr C(Q,\O_*) \ar[r] \ar[d] & \mathscr C(P,H^1) \ar[d] \\
\mathscr C(Q,\O_*) \ar_{\pi_*}[r] & \mathscr C(Q,H^1)
} \]
Here, the restriction map $\mathscr C(P,H^1) \to \mathscr C(Q,H^1)$ is a weak equivalence because $H^1$ is contractible, and $\pi_*$ is a fibration because $\pi$ is.

Finally, the coglueing lemma \cite[Lemma II.8.10]{Goerss-Jardine-1999} implies that the top inclusion 
\[ \xymatrix{
\{f\in\mathscr C(P,\O_*):f\vert_Q=g, \, \pi\circ f=\phi\} \ar@{^{(}->}[r] & \{f\in\mathscr C(P,\O_*):f\vert_Q=g\} 
} \]
is a weak equivalence (whereas all we wanted to show was that this inclusion induces a surjection of path components).  
\end{remark}

The next result is our generalisation of \cite[Theorem 1]{Kusunoki-Sainouchi-1971} for holomorphic forms.

\begin{corollary}  \label{cor:generalisation-of-K-S}
{\rm (a)}  Let $X$ be an open Riemann surface.  Let $\theta$ be a holomorphic $1$-form on $X$ that does not vanish everywhere.  Then the map $\O(X,\C^*) \to H^1(X,\C)$, sending a function $f$ to the cohomology class of $f\theta$, is a Serre fibration.

{\rm (b)}  Let $D$ be an effective divisor on $X$.  Every cohomology class in $H^1(X,\C)$ contains a holomorphic $1$-form with divisor $D$.
\end{corollary}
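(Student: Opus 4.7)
Both parts follow rather directly from Corollary \ref{cor:main}, combined with classical function theory on open Riemann surfaces.

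For part (a), my plan is to instantiate Corollary \ref{cor:main}(a) with $n=1$ and $Y=\C^*$.  I would verify the hypotheses on $Y$ in turn.  The set $\C^*$ is open in $\C$, so it is an open subset of the smooth locus of the analytic subvariety $\C\subset\C$; it is an Oka manifold (for instance because $\exp:\C\to\C^*$ exhibits it as a quotient of an Oka manifold by a properly discontinuous group action, or simply because it is a complex Lie group); and its convex hull is $\C$, since $\C^*$ is itself open and two-real-dimensional.  Next I would observe that $\O_*(X,\C^*)=\O(X,\C^*)$: as $\C^*$ is open in $\C$, one has $T_{f(x)}\C^*=\C$ for every $f\in\O(X,\C^*)$ and every $x\in X$, so the nondegeneracy condition is automatic.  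Corollary \ref{cor:main}(a) then gives (a).

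For part (b), the plan is first to produce a holomorphic $1$-form $\theta_0$ on $X$ with divisor exactly $D$, and then to invoke (a) with $\theta=\theta_0$.  To construct $\theta_0$, I would choose a nowhere-vanishing holomorphic $1$-form $\omega$ on $X$ (Gunning--Narasimhan) and a holomorphic function $h$ on $X$ with divisor $D$.  The latter exists because $X$ is Stein and satisfies $H^2(X,\Z)=0$, so $H^1(X,\O)=0$ by Behnke--Stein and the exponential sheaf sequence forces $H^1(X,\O^*)=0$; hence every effective divisor on $X$ is principal.  Setting $\theta_0=h\omega$ yields a holomorphic $1$-form with divisor $D$.

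Now fix $\alpha\in H^1(X,\C)$.  By part (a) applied to $\theta_0$, the map $\pi:\O(X,\C^*)\to H^1(X,\C)$, $f\mapsto[f\theta_0]$, is a Serre fibration; since its total space is nonempty (it contains the constants) and $H^1(X,\C)$ is path connected, $\pi$ is surjective.  Thus there is $f\in\O(X,\C^*)$ with $[f\theta_0]=\alpha$, and $f\theta_0$ is a holomorphic $1$-form with the same divisor as $\theta_0$, namely $D$.  There is no substantive obstacle in this argument: once Corollary \ref{cor:main} is in hand, (a) is a verification of hypotheses, and (b) is a one-line bootstrap from (a) after the classical construction of a form with divisor $D$.
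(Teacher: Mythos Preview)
Your proposal is correct and follows essentially the same approach as the paper: for (a) you instantiate Corollary~\ref{cor:main}(a) with $Y=\C^*$ and observe that nondegeneracy is automatic, and for (b) you first produce a holomorphic $1$-form with divisor $D$ and then apply the surjectivity of $\pi$.  The only cosmetic difference is that the paper cites Florack directly for a form with divisor $D$ and invokes Corollary~\ref{cor:main}(b), whereas you build the form as $h\omega$ via Gunning--Narasimhan and the Weierstrass-type theorem and re-derive surjectivity from the fibration property; both routes are classical and equivalent.
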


\begin{proof}
(a)  Apply Corollary \ref{cor:main}(a) with $Y=\C^*\subset\C$ and note that every holomorphic function on $X$ with no zeros is nondegenerate.

(b)  Take $Y=\C^*\subset\C$, let $\theta$ be a holomorphic $1$-form on $X$ with divisor $D$, and apply Corollary \ref{cor:main}(b).  (The existence of such a form does not rely on \cite{Kusunoki-Sainouchi-1971}.  It is an immediate consequence of \cite[Satz 4]{Florack-1948}.)
\end{proof}

Now we derive the results from \cite{Forstneric-Larusson-CAG} mentioned above.

\begin{corollary}  \label{cor:minimal}
Let $X$ be an open Riemann surface, let $\theta$ be a holomorphic $1$-form on $X$ with no zeroes, and let $n\geq 3$.  The maps
\[ p:\mathfrak M_*(X,\R^n) \to \O_*(X,\mathfrak A_*), \quad u\mapsto \partial u/\theta, \]
\[ q:\mathfrak N_*(X,\C^n) \to \O_*(X,\mathfrak A_*), \quad F\mapsto \partial F/\theta, \]
are weak equivalences.
\end{corollary}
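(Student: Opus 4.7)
The plan is to apply Corollary \ref{cor:main} with $Y=\mathfrak A_*$, and exhibit each of $p$ and $q$ as the composition of a homotopy equivalence (the projection of a trivial vector bundle with contractible fibre) with an inclusion of the form $\pi^{-1}(V)\hookrightarrow\O_*(X,\mathfrak A_*)$ for a contractible subspace $V\subset H^1(X,\C^n)$.  The inclusion will be a weak equivalence by Corollary \ref{cor:main}, and the result will follow.

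For $q$: a holomorphic map $F:X\to\C^n$ is a null curve iff $g:=\partial F/\theta$ takes values in $\mathfrak A_*$, and then $g\theta=dF$ is exact, so $q$ maps into $\pi^{-1}(0)$.  Using the bilinear description $T_p\mathfrak A=p^\perp$ for $p\in\mathfrak A_*$, I would check that $F$ is nonflat iff $g$ is nondegenerate.  Conversely, every nondegenerate $g\in\pi^{-1}(0)$ has a nonflat null curve primitive, unique up to a constant in $\C^n$.  Fixing $x_0\in X$, the mutually inverse continuous maps $F\mapsto (q(F),F(x_0))$ and $(g,v)\mapsto\bigl(x\mapsto v+\int_{x_0}^x g\theta\bigr)$ give a homeomorphism $\mathfrak N_*(X,\C^n)\cong \pi^{-1}(0)\times\C^n$ in the compact-open topology.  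Thus $q$ factors as this homeomorphism, followed by projection onto the first factor (a homotopy equivalence), followed by $\pi^{-1}(0)\hookrightarrow\O_*(X,\mathfrak A_*)$, which is a weak equivalence by Corollary \ref{cor:main}(c).

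For $p$: given a conformal minimal immersion $u:X\to\R^n$, I would write $du=g\theta+\overline{g\theta}$ with $g:=\partial u/\theta:X\to\mathfrak A_*$.  Exactness of $du$ forces the periods of $g\theta$ to be purely imaginary, so $[g\theta]\in V:=iH^1(X,\R^n)$, a real subspace of $H^1(X,\C^n)$.  Conversely, any nondegenerate $g$ with $[g\theta]\in V$ integrates via $2\,\mathrm{Re}(g\theta)$ to a nonflat conformal minimal immersion, unique up to a constant in $\R^n$; writing $g(x)\in\C w$ for a would-be flat $u$ and analysing $w=w_1+iw_2\in\mathfrak A_*$ recovers the affine $2$-plane $\mathrm{span}_\R(w_1,w_2)$ containing $u(X)$.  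The same pattern as for $q$ then identifies $\mathfrak M_*(X,\R^n)$ with $\pi^{-1}(V)\times\R^n$ and factors $p$ as a homotopy equivalence followed by $\pi^{-1}(V)\hookrightarrow\O_*(X,\mathfrak A_*)$.  This inclusion is a weak equivalence because $V\hookrightarrow H^1(X,\C^n)$ is a weak equivalence between two contractible real vector spaces, and $\pi$ is a Serre fibration by Corollary \ref{cor:main}(a); pullback of a weak equivalence along a Serre fibration preserves weak equivalences.

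The main obstacle I expect is this last pullback step for $p$.  It is a standard fact, but a self-contained justification would compare the long exact sequences in homotopy of the two Serre fibrations $\pi|_{\pi^{-1}(V)}:\pi^{-1}(V)\to V$ and $\pi:\O_*(X,\mathfrak A_*)\to H^1(X,\C^n)$ via the five lemma, using that the fibres over points of $V$ coincide.  Everything else is a formal consequence of Corollary \ref{cor:main}, together with the classical geometric dictionary between nonflatness of $u$ or $F$ and nondegeneracy of the generalised Gauss map $g$ via the cone structure of $\mathfrak A$.
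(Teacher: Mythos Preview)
Your proposal is correct and follows essentially the same approach as the paper: both factor $q$ through the homeomorphism $\mathfrak N_*(X,\C^n)\cong\pi^{-1}(0)\times\C^n$ and invoke Corollary~\ref{cor:main}(c), and both factor $p$ through $\mathfrak M_*(X,\R^n)\cong\pi^{-1}(H^1(X,i\R^n))\times\R^n$ and use that $\pi^{-1}(H^1(X,i\R^n))\hookrightarrow\O_*(X,\mathfrak A_*)$ is the pullback along the fibration $\pi$ of the weak equivalence $H^1(X,i\R^n)\hookrightarrow H^1(X,\C^n)$. The paper simply asserts this last pullback step, while you flag it and sketch the standard five-lemma justification; your extra remarks on the nonflat/nondegenerate dictionary are also more explicit than the paper's treatment, but the overall argument is the same.
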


\begin{proof}
We already noted that $\mathfrak A_*$ satisfies the assumptions on $Y$ in Theorem \ref{thm:main}.  As before, let $\pi:\O_*(X,\mathfrak A_*) \to H^1(X,\C^n)$ send a map $g$ to the cohomology class of $g\theta$.  Choose a base point $a$ in $X$.  First, $q$ is the composition of the inclusion $\pi^{-1}(0)\hookrightarrow \O_*(X,\mathfrak A_*)$, which is a weak equivalence by Corollary \ref{cor:main}(c), and the map $\mathfrak N_*(X,\C^n)\to \pi^{-1}(0)$ induced by $q$, which in turn is the composition of the homeomorphism $\mathfrak N_*(X,\C^n)\to \pi^{-1}(0)\times \C$, $F\mapsto (\partial F/\theta, F(a))$, and the projection $\pi^{-1}(0)\times \C^n \to \pi^{-1}(0)$, which clearly is a weak equivalence.

As for $p$, it is the composition of the inclusion $\pi^{-1}(H^1(X,i\R^n))\hookrightarrow \O_*(X, \mathfrak A_*)$, which is a weak equivalence because it is the pullback of the weak equivalence $H^1(X,i\R^n) \hookrightarrow H^1(X,\C^n)$ by the fibration $\pi$, and the map $\mathfrak M_*(X,\R^n)\to \pi^{-1}(H^1(X,i\R^n))$ induced by $p$, which in turn is the composition of the homeomorphism 
\[ \mathfrak M_*(X,\R^n)\to \pi^{-1}(H^1(X,i\R^n))\times \R^n, \quad u\mapsto (\partial u/\theta, u(a)), \] 
and the projection $\pi^{-1}(H^1(X,i\R^n))\times \R^n \to \pi^{-1}(H^1(X,i\R^n))$, which clearly is a weak equivalence.
\end{proof}

Let us sketch a simplified approach to the main result of \cite{Forstneric-Larusson-CAG}.  Consider the following commuting diagram \cite[(6.1)]{Forstneric-Larusson-CAG}.
\[ \xymatrix{
\mathfrak N_*(X,\C^n)  \ar^q[r] \ar_{\mathrm{Re}}[d]  &  \O_*(X,\mathfrak A_*)  \ar@{^{(}->}^i[r] &  \O(X,\mathfrak A_*) \ar@{^{(}->}^j[r]  &   \mathscr C (X,\mathfrak A_*)  \\ 
\operatorname{Re}\mathfrak N_*(X,\C^n)   \ar@{^{(}->}[r]     &  \mathfrak M_*(X,\R^n)  \ar_{2p}[u] & &
} \]
(The factor of $2$ is needed to make the square commute.)  We have seen how $\pi$ being a fibration easily implies that $2p$ and $q$ are weak equivalences.  As explained in \cite{Forstneric-Larusson-CAG}, the real-part map on the left is a homotopy equivalence by continuity of the Hilbert transform.  It is then immediate that the inclusion $\operatorname{Re}\mathfrak N_*(X,\C^n) \hookrightarrow \mathfrak M_*(X,\R^n)$ is a weak equivalence: this is \cite[Theorem 1.1]{Forstneric-Larusson-CAG}.  A general position argument shows that $i$ is a weak equivalence \cite[Theorem 5.4]{Forstneric-Larusson-CAG}.  Finally, $j$ is a weak equivalence by the standard parametric Oka principle.  Thus, the six spaces in the diagram all have the same weak homotopy type.

We conclude the paper by proving our main result.  The proof closely follows the proofs of \cite[Theorems 4.1 and 5.3]{Forstneric-Larusson-CAG}.

\begin{proof}[Proof of Theorem \ref{thm:main}]
Choose a smooth strongly subharmonic Morse exhaustion function $\rho\colon X\to\R$ and an increasing sequence $c_1<c_2<c_3<\cdots$ of regular values of $\rho$ such that each interval $[c_j,c_{j+1}]$ contains at most one critical value of $\rho$. Set
\[ D_j = \{x\in X\colon \rho(x)<c_j\},\quad j\geq 1, \]
and note that
\begin{equation}\label{eq:cupDj}
	D_1\Subset D_2\Subset D_3\Subset\cdots\Subset \bigcup_{j\geq 1} D_j=X
\end{equation}
is an exhaustion of $X$ by smoothly bounded, relatively compact domains whose closures are $\O(X)$-convex.  We may assume that $D_1$ is simply connected.  Denote by $A$ the analytic subvariety of $\C^n$ in which $Y$ is a domain.

We claim that to prove the theorem it suffices to construct a sequence of homotopies of nondegenerate holomorphic maps 
\[ f_{p,j}^t\colon \overline D_j\to Y,\quad (p,t)\in P\times[0,1], \]
and a sequence of numbers $\epsilon_j>0$, $j\geq 1$, such that the following properties are satisfied for all $j$.
\begin{itemize}
\item[\rm (1$_j$)] $f_{p,j}^t=f(p)|_{\overline D_j}$ for all $(p,t)\in (P\times\{0\})\cup(Q\times[0,1])$.
\item[\rm (2$_j$)] $\|f_{p,j}^t-f_{p,j-1}^t\|_{\overline D_{j-1}}<\epsilon_j$ for all $(p,t)\in P\times[0,1]$.
\item[\rm (3$_j$)] $\int_\gamma f_{p,j}^t\theta=\int_\gamma f_{p,j-1}^t\theta$ for all loops $\gamma$ in $D_{j-1}$ and all $(p,t)\in P\times[0,1]$.
\item[\rm (4$_j$)] $\int_\gamma f_{p,j}^1\theta=\int_\gamma \phi(p)$ for all loops $\gamma$ in $D_j$ and all $p\in P$.
\item[\rm (5$_j$)] $2\epsilon_j<\epsilon_{j-1}$.
\item[\rm (6$_j$)] If $h\colon X\to A\subset\C^n$ is a holomorphic map such that $\|h-f_{p,j}^t\|_{\overline D_j}<2\epsilon_j$ for some $(p,t)\in P\times[0,1]$, then $h\vert_{\overline D_j}$ takes its values in $Y$ and is nondegenerate.
\end{itemize}

Indeed, assume for a moment that such sequences exist. By \eqref{eq:cupDj}, {\rm (2$_j$)}, and {\rm (5$_j$)}, there is a limit homotopy of holomorphic maps
\[ f_p^t =\lim_{j\to\infty} f_{p,j}^t\colon X\to A,\quad (p,t)\in P\times[0,1], \]
such that $\|f_p^t-f_{p,j}^t\|_{\overline D_j}<2\epsilon_j$ for all $(p,t)\in P\times[0,1]$ and all $j\geq 1$.  Thus, properties {\rm (6$_j$)} ensure that $f_p^t\in\O_*(X,Y)$ for all $(p,t)\in P\times[0,1]$; take into account \eqref{eq:cupDj}.  Moreover, conditions {\rm (1$_j$)} imply that $f_p^t=f(p)$ for all $(p,t)\in (P\times\{0\})\cup(Q\times[0,1])$, so the homotopy keeps $f|_Q$ fixed.  Finally, in view of {\rm (4$_j$)} we have $\int_\gamma f_p^1\theta=\int_\gamma \phi(p)$ for all loops $\gamma$ in $X$ and all points $p\in P$, that is, $\pi\circ f_p^1=\phi(p)$ for all $p\in P$.  Setting $f^1(p)=f_p^1$, $p\in P$, we have $\pi\circ f^1=\phi$ on $P$.  This completes the proof of the theorem under the assumption that the above-mentioned sequences exist. 

We shall construct the sequences $f_{p,j}^t$ and $\epsilon_j$, $j\geq 1$, satisfying {\rm (1$_j$)}--{\rm (6$_j$)} inductively, adapting the arguments of the proofs of \cite[Theorems 4.1 and 5.3]{Forstneric-Larusson-CAG}. 

The basis of the induction is given by the homotopy
\[ f_{p,1}^t = f(p)|_{\overline D_1},\quad (p,t)\in P\times[0,1]. \]
Choose $\epsilon_1>0$ small enough that {\rm (6$_1$)} holds. Since $Q\subset P$, property {\rm (1$_1$)} trivially holds.  Since $D_1$ is simply connected, we have $\int_\gamma f_{p,1}^1\theta=0=\int_\gamma \phi(p)$ for all loops $\gamma$ in $D_1$ and all points $p\in P$, so {\rm (4$_1$)} holds.  The other properties are vacuous for $j=1$.

For the inductive step, suppose that for some $j\geq 2$ we already have homotopies $f_{p,i}^t$ and numbers $\epsilon_i$ meeting the corresponding requirements for $i=1,\ldots,j-1$.  Let us prove the existence of suitable $f_{p,j}^t$ and $\epsilon_j$. 

We consider two cases, depending on whether $\rho$ has a critical value in $[c_{j-1},c_j]$ or not.  Recall that $\rho$ has at most one critical value in that interval.

\medskip

\noindent{\em Case 1: The noncritical case.}  Assume that $\rho$ has no critical value in $[c_{j-1},c_j]$, so $\overline D_{j-1}$ is a strong deformation retract of $\overline D_j$. Reasoning as in the proof of \cite[Theorem 4.1]{Forstneric-Larusson-CAG}, we embed the homotopy $f_{p,j-1}^t$ as the core 
\begin{equation}\label{eq:core}
	f_{p,j-1}^t=h_{p,0}^t
\end{equation}
of a period-dominating spray of holomorphic maps
\begin{equation}\label{eq:hpzt}
	h_{p,\zeta}^t\colon \overline D_{j-1}\to Y,\quad \zeta\in B,\; (p,t)\in P\times [0,1].
\end{equation}
Here, $B$ is a sufficiently small open ball centred at the origin in $\C^N$ for some $N\geq 1$, and $\zeta=(\zeta_1,\ldots,\zeta_N)$ is a parameter in $B$ on which the maps $h_{p,\zeta}^t$ depend holomorphically. 

In the proof of \cite[Theorem 4.1]{Forstneric-Larusson-CAG}, the period-domination follows from \cite[Lemma 5.1]{Alarcon-Forstneric-2014} (see also \cite[Lemma 3.6]{Alarcon-Forstneric-Crelle}) with respect to a fixed basis of the first homology group $H_1(D_{j-1}, \Z)$. In our case, we use exactly the same argument but apply the following generalisation of \cite[Lemma 5.1]{Alarcon-Forstneric-2014}.

\begin{lemma}   \label{lem:5.1}
Let $X$, $\theta$, and $Y$ be as in Theorem \ref{thm:main} (except that $Y$ need not be Oka).  Let $M\subset X$ be a smoothly bounded $\O(X)$-convex compact domain and let $C_1,\ldots,C_l$ be smooth embedded loops in $M$ that form a basis of $H_1(M,\Z)$ and only meet at a common point $p$ in $M$ and are otherwise mutually disjoint, such that the compact set $C=\bigcup\limits_{j=1}^l C_j\subset M$ is $\O(X)$-convex. 

Then for any holomorphic map $\psi\colon M\to Y$, there are an open neighbourhood $U$ of the origin in some $\C^N$ and a holomorphic map $ \Phi_\psi:U\times M\to Y$, such that $\Phi_\psi(0,\cdot)=\psi$ and the period map
\[ U \to (\C^n)^l, \quad \zeta\longmapsto \Big( \int_{C_j}\Phi_\psi(\zeta,\cdot)\theta\Big)_{j=1}^l, \]
has maximal rank equal to $ln$ at $\zeta=0$.
\end{lemma}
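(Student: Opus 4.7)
The plan is to adapt the proof of the special case $Y=\mathfrak A_*$ given in \cite[Lemma 5.1]{Alarcon-Forstneric-2014}, replacing the globally defined tangent vector fields on the null quadric used there with holomorphic vector fields tangent to $A$ on a small Stein neighbourhood of $\psi(M)$ in $Y$, produced via Cartan's Theorem~A applied to the locally free tangent sheaf $TA$ restricted to $Y\subset A_{\mathrm{reg}}$. I interpret the hypothesis on $\psi$ as implicitly requiring nondegeneracy, which is the setting in which this lemma is applied in the proof of Theorem~\ref{thm:main}.

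First I would pick a Stein neighbourhood $U_0\subset Y$ of $\psi(M)$ and, by Cartan~A, finitely many holomorphic vector fields $V_1,\dots,V_s$ on $U_0$ tangent to $A$ whose values span $T_yA$ at every $y\in\psi(M)$; after shrinking $U_0$, each flow $\phi^t_{V_\beta}$ is defined for $|t|\leq t_0$ and takes values in $Y$. Next I would choose distinct points $a_{j,k}\in C_j\setminus\{p\}$ away from the zero set of $\theta$, for $j=1,\dots,l$ and $k=1,\dots,n$, together with tangent vectors $w_{j,k}\in T_{\psi(a_{j,k})}A$ such that for every $j$ the $n$ vectors $w_{j,1},\dots,w_{j,n}$ span $\C^n$. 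This relies on the fact that, if $\psi$ is nondegenerate, then for any hyperplane $H\subset\C^n$ the locus $\{x\in M:T_{\psi(x)}A\subset H\}$ is a proper (hence discrete) analytic subvariety, so the tangent spaces $T_{\psi(x)}A$ along any loop $C_j$ are not collectively contained in $H$, and $n$ good points can be extracted. Expanding $w_{j,k}=\sum_\beta c_{j,k,\beta}V_\beta(\psi(a_{j,k}))$ and applying Runge-Mergelyan on the $\O(X)$-convex set $C$, I would then produce functions $h_{j,k}\in\O(X)$ approximating a continuous bump concentrated on a small arc of $C_j$ around $a_{j,k}$ and nearly vanishing on the rest of $C$.

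Then I would set $N=lns$ and define, for $\zeta=(\zeta_{j,k,\beta})$ in a small neighbourhood $U$ of $0\in\C^N$,
\[
\Phi_\psi(\zeta,x)=\phi^{F_1(\zeta,x)}_{V_1}\circ\cdots\circ\phi^{F_s(\zeta,x)}_{V_s}(\psi(x)),\qquad F_\beta(\zeta,x)=\sum_{j,k}\zeta_{j,k,\beta}\,h_{j,k}(x).
\]
Shrinking $U$ if necessary, this is holomorphic $U\times M\to Y$ with $\Phi_\psi(0,\cdot)=\psi$. The chain rule at $\zeta=0$ yields $\partial_{\zeta_{j,k,\beta}}\Phi_\psi(0,x)=h_{j,k}(x)V_\beta(\psi(x))$, hence
\[
\frac{\partial}{\partial \zeta_{j,k,\beta}}\int_{C_{j'}}\Phi_\psi\,\theta\bigg|_{\zeta=0}=\int_{C_{j'}}h_{j,k}(x)\,V_\beta(\psi(x))\,\theta(x),
\]
which by the concentration of $h_{j,k}$ is nearly zero for $j'\neq j$ and nearly a nonzero scalar multiple of $V_\beta(\psi(a_{j,k}))$ for $j'=j$, the scalar being nonzero because $a_{j,k}$ avoids the zeros of $\theta$. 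Taking the $c_{j,k,\beta}$-combination of these partial derivatives gives a direction in $\C^N$ whose image under the period-map differential approximates the vector in $(\C^n)^l$ with $w_{j,k}$ in the $j$-th $\C^n$-factor and zero in the others; by our spanning choice of the $w_{j,k}$ and continuity of rank, the period map has maximal rank $ln$ at $\zeta=0$.

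The hard part is the spanning step: guaranteeing, for each loop $C_j$, the existence of $n$ points whose tangent spaces in $A$ span $\C^n$. In a genuinely degenerate situation (say $\psi$ constant, or $\psi$ mapping all of $C_j$ into a lower-dimensional analytic subset) the construction as described fails, and one would either tacitly assume nondegeneracy (as in the application in the proof of Theorem~\ref{thm:main}) or precede the construction by a deformation of $\psi$ within $Y$ enabled by the hypothesis that the convex hull of $Y$ equals $\C^n$. Modulo this point, the remainder of the argument mirrors the proofs of \cite[Lemma 5.1]{Alarcon-Forstneric-2014} and \cite[Lemma 3.6]{Alarcon-Forstneric-Crelle}.
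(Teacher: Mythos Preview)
Your approach is essentially the same as the paper's: produce spanning holomorphic tangent vector fields via Cartan's Theorem~A (the paper applies it globally on the Stein space $A$ to get fields on all of $Y$, you on a Stein neighbourhood of $\psi(M)$), compose their flows with bump-type coefficients supported on the individual loops and away from the zeros of $\theta$ (the paper uses smooth bumps $h_{i,j}$ on $C$ and defers the Mergelyan step to the cited argument in \cite{Alarcon-Forstneric-2014}, you pass directly to holomorphic bumps on $X$), and then read off the rank of the period map by localisation near the chosen points.  These differences are cosmetic.

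Your caveat about nondegeneracy is well taken and not addressed in the paper's proof sketch.  For any holomorphic $\Phi_\psi$ with $\Phi_\psi(0,\cdot)=\psi$ one has $\partial_{\zeta_k}\Phi_\psi(0,x)\in T_{\psi(x)}A$, so the differential of the period map at $\zeta=0$ takes values in $\prod_j\operatorname{span}\{T_{\psi(x)}A:x\in C_j\}$; in particular, when $\dim A<n$ and $\psi$ is constant the rank can never reach $ln$.  Thus the lemma should be read with the implicit hypothesis that the tangent spaces $T_{\psi(x)}A$ along each $C_j$ span $\C^n$, which, as you note, follows from nondegeneracy of $\psi$ by the identity principle and is exactly how the lemma is invoked in the proof of Theorem~\ref{thm:main}.
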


\begin{proof}
In the statement of \cite[Lemma 5.1]{Alarcon-Forstneric-2014}, it is assumed that the holomorphic $1$-form $\theta$ has no zeroes in $M$ and that $Y=A\setminus\{0\}$, where $A$ is an irreducible closed conical subvariety of $\C^n$ for some $n\ge 3$ that is not contained in any hyperplane of $\C^n$, such that $Y$ is smooth.  We point out the changes that are required in the proof given in \cite{Alarcon-Forstneric-2014} for it to work in our more general setting. 

First, we slightly modify the smooth embedded loops $C_1,\ldots, C_l$ in $M$ so that they avoid the zeroes of $\theta$.  This clearly is possible since the zero set of $\theta$ in $M$ is finite.  (Alternatively, instead of worrying about the choice of the loops, we can choose the points $x_{i,1},\ldots,x_{i,m}\in C_i\setminus\{p\}$ in the proof of \cite[Lemma 5.2]{Alarcon-Forstneric-2014}, $i=1,\ldots,l$, to lie in the complement in $C_i\setminus\{p\}$ of the zero set of $\theta$; again this is easy since the zero set of $\theta$ in $M$ is finite.)  As in the proof of \cite[Lemma 5.1]{Alarcon-Forstneric-2014}, Cartan's Theorem A applied on $A$ provides holomorphic tangent vector fields $V_1,\ldots,V_m$ on $Y$ that span $T_y Y$ for each $y\in Y$.  We denote by $\phi_t^j$ the flow of $V_j$ for small complex values of time $t$ and define, for a small open neighbourhood $U$ of the origin in $(\C^m)^l$, a smooth map
\[  \Psi\colon U\times C\to Y, \quad (\zeta,x)\longmapsto \phi_{\zeta_{1,1} h_{1,1}(x)}^1\circ\cdots\circ \phi_{\zeta_{l,m} h_{l,m}(x)}^m\big(\psi(x)\big), \]
where $\zeta=\big(\zeta_{j,1},\ldots,\zeta_{j,m}\big)_{j=1}^l\in U$, $h_{i,1},\ldots,h_{i,m}\colon C\to\C$ are smooth functions with support in $C_i$, $i=1,\ldots,l$, and $U$ is small enough that $\Psi$ assumes values in $Y$ (this requires compactness of $M$).  We complete the proof exactly as in \cite{Alarcon-Forstneric-2014},  keeping $U$ small enough that all the maps in the proof take values in $Y$.
\end{proof}

By the parametric Oka property with approximation (see \cite[Theorem 5.4.4]{Forstneric-2011}), we may now approximate the spray $h_{p,\zeta}^t$, uniformly on $\overline D_{j-1}$ and uniformly with respect to $p$, $t$, and $\zeta$, by a holomorphic spray of holomorphic maps
\[ g_{p,\zeta}^t\colon \overline D_j\to Y,\quad \zeta\in rB,\; (p,t)\in P\times [0,1], \]
for some number $0<r<1$ as close to $1$ as desired.  If the approximation is close enough, the implicit function theorem and the period-domination property of the spray $h_{p,\zeta}^t$ give a continuous map $\zeta\colon P\times[0,1]\to rB$, vanishing on $P\times\{0\}$ and on $Q\times[0,1]$, such that
\begin{equation}\label{eq:3j4j}
	\int_\gamma g_{p,\zeta(p,t)}^t\theta = \int_\gamma h_{p,0}^t\theta \overset{\textrm{(2)}}{=} \int_\gamma f_{p,j-1}^t\theta
\end{equation}
for all loops $\gamma$ in $D_{j-1}$ and all $(p,t)\in P\times[0,1]$.  Thus, the homotopy of holomorphic maps
\[f_{p,j}^t = g_{p,\zeta(p,t)}^t\colon \overline D_j\to Y,\quad (p,t)\in P\times[0,1], \]
satisfies conditions {\rm (3$_j$)} and {\rm (4$_j$)} in view of \eqref{eq:3j4j}, since we are assuming {\rm (3$_{j-1}$)} by the induction hypothesis and $\overline D_{j-1}$ is a strong deformation retract of $\overline D_j$.  Moreover, \eqref{eq:core}, {\rm (1$_{j-1}$)}, the fact that $\zeta(p,t)=0$ for all $(p,t)\in(P\times\{0\})\cup(Q\times[0,1])$, and the identity principle guarantee condition {\rm (1$_j$)}.  Furthermore, if the approximation of $h_{p,\zeta}^t$ by $g_{p,\zeta}^t$ is close enough, condition {\rm (2$_j$)} trivially holds and all the maps in the homotopy $f_{p,j}^t$ are nondegenerate since the maps in $f_{p,j-1}^t$ are.  Therefore, to complete the inductive step, it only remains to choose a number $\epsilon_j>0$ such that conditions {\rm (5$_j$)} and {\rm (6$_j$)} are satisfied.

\medskip

\noindent{\em Case 2: The critical case.}  Now assume that there is a critical value of $\rho$ in $(c_{j-1},c_j)$.  By our assumptions, there is exactly one critical value of $\rho$ in the interval.  This implies the existence of an embedded real analytic arc $E$ in $D_j\setminus D_{j-1}$, attached to $\overline D_{j-1}$ at both ends, meeting the boundary of $D_{j-1}$ transversely there, and otherwise disjoint from $\overline D_{j-1}$, such that the set
\[ S = \overline D_{j-1}\cup E\subset D_j \]
is a strong deformation retract of $\overline D_j$.  We choose the arc $E$ to contain no zeroes of $\theta$. 

We consider two cases, depending on whether the two endpoints of $E$ lie in the same connected component of $\overline D_{j-1}$ or not.

\smallskip

\noindent{\em Case 2.1.} Assume that the two endpoints of $E$ lie in the same connected component of $\overline D_{j-1}$.  Then there is an embedded closed real analytic curve $C$ in $S$ which contains $E$ and whose homology class belongs to $H_1(D_j, \Z)$ but not to $H_1(D_{j-1}, \Z)$.  We may assume that $C$ contains no zeroes of $\theta$. 

Observe that the set $S=\overline D_{j-1}\cup E=\overline D_{j-1}\cup C\subset X$ is $\O(X)$-convex.  We split $C$ into three subarcs $C=C_1\cup C_2\cup C_3$, lying end to end and being otherwise mutually disjoint, such that $C_3=C\cap \overline D_{j-1}$; hence $C_1\cup C_2=E$.  Also for $i=1,2$, we choose a closed subarc $C_i'$ of $C_i$ that lies in the relative interior of $C_i$.

Following the proof of \cite[Theorem 4.1]{Forstneric-Larusson-CAG}, we extend the homotopy $f_{p,j-1}^t\colon\overline D_{j-1}\to Y$, with the same name, to a continuous family of continuous maps $S=\overline D_{j-1}\cup C_1\cup C_2\to Y$ such that
\begin{equation}\label{eq:extension}
	f_{p,j-1}^t=f(p)|_S\quad \text{for all $(p,t)\in (P\times\{0\})\cup (Q\times[0,1])$}.
\end{equation}
Such an extension exists in view of {\rm (1$_{j-1}$)}.  

For the next step we wish to use \cite[Lemma 3.1]{Forstneric-Larusson-CAG}. It remains true with the punctured null quadric $\mathfrak A_*\subset\mathfrak A\subset\C^n$ replaced by the more general target $Y\subset A\subset \C^n$.  We shall indicate the necessary changes to the proof in \cite{Forstneric-Larusson-CAG}.  Since the convex hull ${\rm Co}(Y)$ of $Y$ is $\C^n$, in the notation of \cite{Forstneric-Larusson-CAG}, there is a number $r_1>0$ large enough that
\begin{equation}\label{eq:Omega}
	\sigma(P\times[0,1])\subset Y\cap r_1\B
	\quad\text{and}\quad
	\{\alpha_p^t\colon (p,t)\in P\times[0,1]\}\subset {\rm Co}(Y\cap r_1\B),
\end{equation}
where $\B$ denotes the open unit ball in $\C^n$.  Now we come to the main change to the proof of \cite[Lemma 3.1]{Forstneric-Larusson-CAG}.  Let $\delta>0$ be so small that there is a continuous function $a_\delta\colon Y\cap r_1\B\to (0,\delta]$ with the following properties.
\begin{itemize}
\item If for each $z\in Y\cap r_1\B$, we denote by $B_z$ the ball centered at $z$ with radius $a_\delta(z)$ in the complex affine subspace of dimension $n-\dim Y$ that passes through $z$ and is orthogonal to $Y$ at $z$, then $B_z\cap A=\{z\}$ for all $z\in Y\cap r_1\B$.  If $A=\C^n$, we set $B_z=\{z\}$.
\item  The balls $B_z$, $z\in Y\cap r_1\B$, are mutually disjoint.
\end{itemize}
Now let
\[ \Omega_\delta=\bigcup_{z\in Y\cap r_1\B} B_z.  \]
Since $Y\cap r_1\B\subset \Omega_\delta$, by \eqref{eq:Omega},
\[
	\sigma(P\times[0,1])\subset \Omega_\delta
	\quad\text{and}\quad
	\{\alpha_p^t\colon (p,t)\in P\times[0,1]\}\subset {\rm Co}(\Omega_\delta).
\] 
We consider the continuous retraction $\tau\colon \Omega_\delta\to Y\cap r_1\B$ given by
\[ \tau\vert_{B_z}=z,\quad z\in Y\cap r_1\B, \]
and assume that $\delta$ is so small that
\[ \left|\int_0^1 \gamma(s)\, ds-\int_0^1\tau(\gamma(s))\, ds\right|<\epsilon/4\quad \text{for all paths $\gamma\colon[0,1]\to\Omega_\delta$.}  \]
Here $\epsilon>0$ is the number given in the statement of \cite[Lemma 3.1]{Forstneric-Larusson-CAG}.  Using the domain $\Omega_\delta$ and the retraction $\tau$, the rest of the proof is exactly the same as the proof of \cite[Lemma 3.1]{Forstneric-Larusson-CAG} and we omit the details.  This concludes the proof of the generalisation of \cite[Lemma 3.1]{Forstneric-Larusson-CAG} to our setting.

We now continue the proof of the theorem. 

Given a number $\mu>0$, which will be specified later, \cite[Lemma 3.1]{Forstneric-Larusson-CAG} provides a continuous family of continuous maps
\[ g_p^{t,s}\colon C\to Y,\quad (p,t)\in P\times[0,1],\; s\in [0,1], \]
satisfying the following conditions.
\begin{itemize}
\item[\rm (i)] $g_p^{t,s}=f(p)|_C$ for all $(p,t)\in (P\times\{0\})\cup(Q\times[0,1])$ and all $s\in [0,1]$.
\item[\rm (ii)] $g_p^{t,0}=f_{p,j-1}^t|_C$ for all $(p,t)\in P\times[0,1]$.
\item[\rm (iii)] $g_p^{t,s}=f_{p,j-1}^t$ on $C\setminus C_1'$ for all $(p,t)\in P\times[0,1]$ and all $s\in[0,1]$.
\item[\rm (iv)] $\big|\int_C g_p^{t,1}\theta-\int_C \phi(p)\big|<\mu$ for all $p\in P$.
\end{itemize}
Since $\theta$ has no zeroes on $C$, $\theta$ has no zeroes in a small open neighbourhood of $C$.  This enables us to apply the generalised \cite[Lemma 3.1]{Forstneric-Larusson-CAG} to obtain the maps $g_p^{t,s}$ exactly as in the proof of \cite[Theorem 4.1]{Forstneric-Larusson-CAG}, but replacing the open Riemann surface $X$ (called $M$ in \cite{Forstneric-Larusson-CAG}) by an open neighbourhood of $C$ in $X$ in which $\theta$ vanishes nowhere. 

Next, using Lemma \ref{lem:5.1}, we embed $f_{p,j-1}^t|_{C_2}$ as the core
\begin{equation}\label{eq:corecritical}
	f_{p,j-1}^t|_{C_2}=h_{p,0}^t
\end{equation}
of a period-dominating spray of continuous maps
\[ h_{p,\zeta}^t\colon C_2\to Y,\quad (p,t)\in P\times[0,1],\; \zeta\in B, \]
where, as in the noncritical case, $B$ is an open ball containing the origin in $\C^N$ for some $N\geq 1$ and $\zeta$ is a parameter in $B$, such that
\begin{equation}\label{eq:C2C2'}
	h_{p,\zeta}^t=f_{p,j-1}^t\quad \text{on $C_2\setminus C_2'$ for all $(p,t)\in P\times[0,1]$}.
\end{equation}
When applying Lemma \ref{lem:5.1}, we use the fact that $\theta$ has no zeroes on $C$ and hence none on $C_2$.

Assuming that $\mu>0$ is small enough, condition (iv), the implicit function theorem, and the period-domination property of the spray $h_{p,\zeta}^t$ give a continuous map $\zeta\colon P\times[0,1]\to B$, vanishing on $P\times\{0\}$ and on $Q\times[0,1]$, such that the family of maps
\[ u_p^t\colon S\to Y,\quad (p,t)\in P\times[0,1], \]
given by
\[
	u_p^t|_{\overline D_{j-1}}  = f_{p,j-1}^t,\quad
	u_p^t|_{C_1}  = g_p^{t,1}|_{C_1},  \quad
	u_p^t|_{C_2}  =  h_{p,\zeta(p,t)}^t \quad\text{for all $(p,t)\in P\times[0,1]$}
\]
satisfies the following properties.
\begin{itemize}
\item[\rm (a)] $u_p^t$ is a continuous map for all $(p,t)\in P\times[0,1]$.
\item[\rm (b)] $u_p^t=f(p)|_S$ for all $(p,t)\in (P\times\{0\})\cup(Q\times[0,1])$.
\item[\rm (c)] $\int_\gamma u_p^t\theta=\int_\gamma \phi(p)$ for all loops $\gamma$ in $S$ and all $p\in P$. 
\end{itemize}
Indeed, condition {\rm (a)} follows from {\rm (iii)} and \eqref{eq:C2C2'}, while {\rm (b)} follows from {\rm (i)}, \eqref{eq:corecritical}, and the fact that $\zeta(p,t)=0$ for all $(p,t)\in (P\times\{0\})\cup(Q\times[0,1])$. To ensure {\rm (c)}, we use {\rm (4$_{j-1}$)} and {\rm (iv)}, and exploit the period-domination property of $h_{p,\zeta}^t$, assuming that $\mu>0$ has been chosen sufficiently small.

To finish the proof we apply the same argument as in the noncritical case but replacing $f_{p,j-1}^t$ by the homotopy  $u_p^t$.  This is possible in view of conditions {\rm (a)}, {\rm (b)}, and {\rm (c)}, and the facts that $u_p^t|_{\overline D_{j-1}}  = f_{p,j-1}^t$ and $S$ is $\O(X)$-convex.

\smallskip

\noindent{\em Case 2.2.}   Now assume that the endpoints of $E$ lie in different connected components of $\overline D_{j-1}$.  Then $E$ does not close to an embedded loop in $\overline D_{j-1}\cup E$ and hence no new element of the homology basis appears.  In this case, just extending $f_{p,j-1}^t\colon\overline D_{j-1}\to Y$ to a continuous family of continuous maps $S=\overline D_{j-1}\cup E\to Y$ satisfying \eqref{eq:extension} enables us to apply the argument that we used in the noncritical case.

This completes the inductive step and concludes the proof of the theorem.
\end{proof}

\end{document}